\def\Ddots{\mathinner{\mkern1mu\raise\p@
\vbox{\kern7\p@\hbox{.}}\mkern2mu
\raise4\p@\hbox{.}\mkern2mu\raise7\p@\hbox{.}\mkern1mu}}
\newtheorem{theorem}{Theorem}[section]
\newtheorem{corollary}[theorem]{Corollary}
\newtheorem{lemma}[theorem]{Lemma}
\newtheorem{question}[theorem]{Question}
\theoremstyle{definition}
\newtheorem{definition}[theorem]{Definition}
\title{\textbf{A concept of largeness of combinatorially rich sets}}
\date{}
\author{Pintu Debnath
        \footnote{Department of Mathematics,
                 Basirhat College,
                  Basirhat-743412, North 24th Parganas, West Bengal, India. \hfill\break
                  {\tt pintumath!989@gmail.com}}
}
\begin{document}

\maketitle
\begin{abstract}
In \cite[Proposition 8.21 Page-169]{F}  Using the methods of topological dynamics,  H. Furstenberg introduced the notion of central set and proved the famous Central Sets Theorem. Later, in \cite{DHS}, D. De, H. Hindman and D. Struss established a strong Central Sets Theorem, where they introduced the notion of $J$-set. Like $J$-set, in \cite{BG} V. Bergelson and D. Glasscock introduced the notion of combinatorially rich set ( $CR$-set).
Let $u,v\in\mathbb{N}$ and $A$ be a $u\times v$ matrix with rational entries. In \cite{HS23}
N. Hindman and D. Strauss established
 that whenever $B$ is a piecewise syndetic set (resp. $J$-set) in
$\mathbb{Z}$, $\left\{ \vec{x}\in\mathbb{Z}^{v}:A\vec{x}\in B^{u}\right\} $
is a piecewise syndetic set ( resp. $J$-set) in $\mathbb{Z}^{v}$. In this article, we prove the same result for $CR$-sets using an equivalent definition of $CR$-set in \cite{HHST} by  N. Hindman, H. Hosseini, D. Strauss and M. Tootkaboni. 
\end{abstract}

\section{Introduction}

We start by presenting  a brief review of the algebraic structure of the Stone-\v{C}ech
	compactification of a semigroup $\left(S,+\right)$, not necessarily commutative with the discrete topology, to present some results in this article.
	The set $\{\overline{A}:A\subset S\}$ is a basis for the closed sets
	of $\beta S$. The operation `$+$' on $S$ can be extended to
	the Stone-\v{C}ech compactification $\beta S$ of $S$ so that $(\beta S,+)$
	is a compact right topological semigroup (meaning that for each    $p\in\beta$ S the function $\rho_{p}\left(q\right):\beta S\rightarrow\beta S$ defined by $\rho_{p}\left(q\right)=q+ p$ 
	is continuous) with $S$ contained in its topological center (meaning
	that for any $x\in S$, the function $\lambda_{x}:\beta S\rightarrow\beta S$
	defined by $\lambda_{x}(q)=x+q$ is continuous). This is a famous
	Theorem due to Ellis that if $S$ is a compact right topological semigroup
	then the set of idempotents $E\left(S\right)\neq\emptyset$. A nonempty
	subset $I$ of a semigroup $T$ is called a $\textit{left ideal}$
	of $S$ if $T+I\subset I$, a $\textit{right ideal}$ if $I+T\subset I$,
	and a $\textit{two sided ideal}$ (or simply an $\textit{ideal}$)
	if it is both a left and right ideal. A $\textit{minimal left ideal}$
	is the left ideal that does not contain any proper left ideal. Similarly,
	we can define $\textit{minimal right ideal}$ and $\textit{smallest ideal}$.
	
	Any compact Hausdorff right topological semigroup $T$ has the smallest
	two sided ideal
	
	$$
	\begin{aligned}
		K(T) & =  \bigcup\{L:L\text{ is a minimal left ideal of }T\}\\
		&=  \bigcup\{R:R\text{ is a minimal right ideal of }T\}.
	\end{aligned}$$

	Given a minimal left ideal $L$ and a minimal right ideal $R$, $L\cap R$
	is a group, and in particular contains an idempotent. If $p$ and
	$q$ are idempotents in $T$ we write $p\leq q$ if and only if $p+q=q+p=p$.
	An idempotent is minimal with respect to this relation if and only
	if it is a member of the smallest ideal $K(T)$ of $T$. Given $p,q\in\beta S$
	and $A\subseteq S$, $A\in p+ q$ if and only if the set $\{x\in S:-x+A\in q\}\in p$,
	where $-x+A=\{y\in S:x+ y\in A\}$. See \cite{HS12} for
	an elementary introduction to the algebra of $\beta S$ and for any
	unfamiliar details.

 \begin{definition}
   Let $\left(S,+\right)$ be a commutative semigroup and let $A\subseteq S$.
   \begin{itemize}
 \item[(a)](\textbf{Piecewise syndetic set}) A is piecewise syndetic if and only if there exists $G\in\mathcal{P}_{f}\left(S\right)$
such that for every $F\in\mathcal{P}_{f}\left(S\right)$, there is
some $x\in S$ such that $F+x\subseteq\cup_{t\in G}(-t+A)$. Equivalently $A$ is a piecewise syndetic if and only if $\overline{A}\cap K\left(\beta S\right)\neq\emptyset$.
\item[(b)] (\textbf{$J$-set}) A is $J$-set if and only if for every $F\in\mathcal{P}_{f}\left({}^{\mathbb{N}}S\right)$,
there exist $a\in S$ and $H\in\mathcal{P}_{f}\left(\mathbb{N}\right)$
such that for each $f\in F$, $a+\sum_{n\in H}f(n)\in A$.
   \end{itemize}

\end{definition}

Let $J\left(S\right)=\left\{p\in\beta S:\left(\forall\in p\right) \left(A\text{ is a } J\text{ set }\right) \right\}$. By \cite[Lemma 14.14.5, Page-345]{HS12} and \cite[Theorem 3.20 Page-63]{HS12} $J\left(S\right)\neq\emptyset$. It is shown in \cite[Theorem 14.14.4 Page-345]{HS12} that the set $J\left(S\right)$  is a compact two sided ideal of $\beta S$

\begin{definition}
    Let $\left(S,+\right)$ be a commutative semigroup and let $A\subseteq S$.
    \begin{itemize}
        \item[(a)](\textbf{Central set}) $A$ is a central set if and only if there is an idempotent in $\overline{A}\cap\ K\left(\beta S\right)$.
        \item[(b)](\textbf{$C$-set}) $A$ is a $C$ set if and only if there is an idempotent in $\overline{A}\cap\ J\left(S\right)$.
    \end{itemize}
\end{definition}

\begin{definition}
    Let $u,v\in\mathbb{N}$, and let $A$ be a $u\times v$ matrix with rational entries.
    \begin{itemize}
        \item[(a)](\textbf{Image partition regular over $\mathbb{N}$}) The matrix $A$ is image partition regular over $\mathbb{N}$ ($IPR/\mathbb{N}$) if and only if, whenever $\mathbb{N}$ is finitely colored, there exists $\vec{x}\in\mathbb{N}^{v}$ such that the entries of $A\vec{x}$ are monocromatic.

        \item[(b)](\textbf{Image partition regular over $\mathbb{Z}$}) The matrix $A$ is image partition regular over $\mathbb{Z}$ ($IPR/\mathbb{Z}$) if and only if, whenever $\mathbb{Z}\setminus\left\{0\right\}$ is finitely colored, there exists $\vec{x}\in\left\{\mathbb{Z}\setminus\left\{0\right\}\right\}^{v}$ such that the entries of $A\vec{x}$ are monocromatic.
    \end{itemize}
\end{definition}

\begin{theorem}\label{imge in central}\cite[Theorem 2.10]{HLS}
    Let $u,v\in\mathbb{N}$ and let $A$ be a $u\times v$ matrix with entries from $\mathbb{Q}$. The following statements are equivalent.
    \begin{itemize}
        \item[(a)] $A$ is image partition regular over $\mathbb{N}$
        \item[(b)] For every central set $C$ in $\mathbb{N}$, there exists $\vec{x}\in\mathbb{N}^{v}$ such that $A\vec{x}\in C^{v}$.
        \item[(c)] For every central set $C$ in $\mathbb{N}$, $\left\{\vec{x}\in\mathbb{N}^{v}:A\vec{x}\in C^{v}\right\}$ is central in $\mathbb{N}^{v}$.
    \end{itemize}
\end{theorem}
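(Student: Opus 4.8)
The plan is to establish the cycle $(a)\Rightarrow(b)\Rightarrow(c)\Rightarrow(b)$ together with $(b)\Rightarrow(a)$, which yields the equivalence of all three; the substantive implication is $(b)\Rightarrow(c)$, whereas $(a)\Rightarrow(b)$ is the classical statement that image partition regular systems have solutions inside every central set and the rest is soft. The implication $(c)\Rightarrow(b)$ is immediate, since a central set is nonempty: if $\{\vec x\in\mathbb N^{v}:A\vec x\in C^{u}\}$ is central in $\mathbb N^{v}$ then it has a member. For $(b)\Rightarrow(a)$ I would use that some cell of a finite partition of $\mathbb N$ is central: given $\mathbb N=\bigcup_{i=1}^{r}C_{i}$, fix a minimal idempotent $p\in K(\beta\mathbb N)$; since $\mathbb N\in p$ some $C_{i}\in p$, so $p\in\overline{C_{i}}\cap K(\beta\mathbb N)$ and $C_{i}$ is central, and $(b)$ applied to $C_{i}$ yields $\vec x\in\mathbb N^{v}$ with the entries of $A\vec x$ all in $C_{i}$, hence monochromatic.

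For $(a)\Rightarrow(b)$ I would invoke the structure theory of image partition regular matrices: since $A$ is $IPR/\mathbb N$ there exist $m\in\mathbb N$, a $v\times m$ matrix $M$ with entries from $\omega$ and no zero row, and a $u\times m$ first entries matrix $B$ with positive first entries such that $AM=B$. Given a central set $C$ and a minimal idempotent $p$ with $C\in p$, one peels off the columns of $B$ one at a time, using the finite-sums structure carried by $p$ (equivalently the Central Sets Theorem) to force each row whose leading nonzero entry lies in the current column into $C$; this produces $\vec w\in\mathbb N^{m}$ with $B\vec w\in C^{u}$, and then $\vec x=M\vec w\in\mathbb N^{v}$ satisfies $A\vec x=B\vec w\in C^{u}$. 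I would cite this; it is the standard proof that first entries matrices are image partition regular, with $A$ reduced to $B$.

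The real work is $(b)\Rightarrow(c)$. Fix a central $C$ in $\mathbb N$, a minimal idempotent $p\in K(\beta\mathbb N)$ with $C\in p$, and set $D=\{\vec x\in\mathbb N^{v}:A\vec x\in C^{u}\}$. Clearing denominators, which replaces $A$ by $dA$ and $C$ by $dC$ without changing $D$ and keeps $dC$ central (since $x\mapsto dx$ is a homomorphism carrying minimal idempotents to minimal idempotents), we may assume $A$ has integer entries. Let $\phi_{i}:\mathbb N^{v}\to\mathbb Z$ be the homomorphism defined by the $i$-th row of $A$ and put $\Phi=(\phi_{1},\dots,\phi_{u}):\mathbb N^{v}\to\mathbb Z^{u}\subseteq(\beta\mathbb Z)^{u}$; its continuous extension $\widetilde\Phi:\beta\mathbb N^{v}\to(\beta\mathbb Z)^{u}$ is a homomorphism onto the closed subsemigroup $T=\widetilde\Phi[\beta\mathbb N^{v}]=\overline{\Phi[\mathbb N^{v}]}$. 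The first key point is that $(p,\dots,p)\in T$: a basic neighbourhood of $(p,\dots,p)$ is given by sets $S_{1},\dots,S_{u}\in p$, and with $S=\bigcap_{i}S_{i}\in p$ the set $S$ is central, so $(b)$ provides $\vec x\in\mathbb N^{v}$ with $A\vec x\in S^{u}\subseteq S_{1}\times\cdots\times S_{u}$, meeting $\Phi[\mathbb N^{v}]$. Now $(p,\dots,p)$ is a minimal idempotent of $(\beta\mathbb Z)^{u}$ (a product of minimal idempotents, using $K((\beta\mathbb Z)^{u})=K(\beta\mathbb Z)^{u}$ and that a minimal idempotent of $\beta\mathbb N$ is minimal in $\beta\mathbb Z$, as $\mathbb N$ is piecewise syndetic in $\mathbb Z$), so $(p,\dots,p)\in K((\beta\mathbb Z)^{u})\cap T$; since this intersection is nonempty it equals $K(T)$, and thus $(p,\dots,p)$ is a minimal idempotent of $T$. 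Because $\widetilde\Phi$ maps $K(\beta\mathbb N^{v})$ onto $K(T)$, the $\widetilde\Phi$-image of any minimal idempotent of $\beta\mathbb N^{v}$ is a minimal idempotent of $T$; so choosing an idempotent $f$ in the closed subsemigroup $\widetilde\Phi^{-1}[\{(p,\dots,p)\}]$ and a minimal idempotent $q\leq f$ of $\beta\mathbb N^{v}$, we get $\widetilde\Phi(q)$ a minimal idempotent of $T$ with $\widetilde\Phi(q)\leq\widetilde\Phi(f)=(p,\dots,p)$, hence $\widetilde\Phi(q)=(p,\dots,p)$. Then for each $i$, $C\in p=\widetilde\phi_{i}(q)$ gives $\phi_{i}^{-1}[C]\in q$, so $D=\bigcap_{i=1}^{u}\phi_{i}^{-1}[C]\in q$, and $D$ is central in $\mathbb N^{v}$.

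The main obstacle is the classical input $(a)\Rightarrow(b)$: the structure theorem for image partition regular matrices (reduction to a first entries matrix) combined with the Central Sets Theorem. The only delicate point in the new argument is ensuring that the product idempotent $(p,\dots,p)$ lies in the smallest ideal of the image semigroup $T$ and not merely in $T$ — and this is precisely where hypothesis $(b)$ enters, placing $(p,\dots,p)$ inside $T$; thereafter the identity $K(S)\cap T=K(T)$ (valid whenever the subsemigroup $T$ meets $K(S)$) and the lifting of minimal idempotents through surjective homomorphisms complete the passage from $\mathbb N$ to $\mathbb N^{v}$.
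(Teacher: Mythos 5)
This theorem is quoted in the paper from \cite[Theorem 2.10]{HLS} without proof, so there is nothing internal to compare against; judged on its own terms, your argument is essentially sound and follows the standard algebraic route (lift the minimal idempotent $p$ through the extension of $\vec x\mapsto A\vec x$, using hypothesis (b) to place $(p,\dots,p)$ in the closure of the image, then invoke $K(S)\cap T=K(T)$ and the fact that surjective homomorphisms carry smallest ideals onto smallest ideals). The cycle $(a)\Rightarrow(b)\Rightarrow(c)\Rightarrow(b)$ plus $(b)\Rightarrow(a)$ does give the full equivalence, and $(c)\Rightarrow(b)$, $(b)\Rightarrow(a)$ are handled correctly. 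The reduction of $(a)\Rightarrow(b)$ to the first-entries structure theorem plus the Central Sets Theorem is the standard proof and is fine to cite.

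The one place you should not wave your hands is the ``clearing denominators'' reduction in $(b)\Rightarrow(c)$. After replacing $A$ by $dA$ and $C$ by $dC$, the hypothesis you later invoke --- ``$S$ is central, so $(b)$ provides $\vec x$ with $A\vec x\in S^{u}$, meeting $\Phi[\mathbb N^{v}]$'' --- is stated for the \emph{original} matrix, while $\Phi[\mathbb N^{v}]=\left\{dA\vec x:\vec x\in\mathbb N^{v}\right\}$; as written, $A\vec x\in S^{u}$ does not put $\Phi(\vec x)$ in $\overline{S_{1}}\times\cdots\times\overline{S_{u}}$. The WLOG is legitimate, but it needs the observation that $d\mathbb N$ belongs to every idempotent of $\beta\mathbb N$ (the canonical extension $\beta\mathbb N\to\mathbb Z_{d}$ sends idempotents to $0$), so every $S\in p'$ (where $p'\ni dC$) may be replaced by $S\cap d\mathbb N\in p'$, whose preimage $\left\{n\in\mathbb N:dn\in S\right\}$ is central via the isomorphism $n\mapsto dn$ of $\beta\mathbb N$ onto $\overline{d\mathbb N}$; applying $(b)$ to that preimage and multiplying by $d$ then produces a point of $\Phi[\mathbb N^{v}]$ in the given neighbourhood. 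This is a two-line patch, not a structural flaw, but without it the ``we may assume $A$ has integer entries'' step is unjustified. The remaining steps --- $(p,\dots,p)$ is a minimal idempotent of $(\beta\mathbb Z)^{u}$ because $K\left((\beta\mathbb Z)^{u}\right)=K(\beta\mathbb Z)^{u}$ and minimal idempotents of $\beta\mathbb N$ are minimal in $\beta\mathbb Z$; $K(T)=T\cap K\left((\beta\mathbb Z)^{u}\right)$ once the intersection is nonempty; and the descent from an idempotent $f\in\widetilde\Phi^{-1}[\{(p,\dots,p)\}]$ to a minimal $q\leq f$ with $\widetilde\Phi(q)=(p,\dots,p)$ --- are all correct applications of standard facts from \cite{HS12}. (Note also that the statement as printed in the paper should read $A\vec x\in C^{u}$, not $C^{v}$; you silently and correctly use the exponent $u$.)
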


\begin{theorem}\label{imge in C}\cite[Theorem 1.4]{HS20}
    Let $u,v\in\mathbb{N}$ and let $A$ be a $u\times v$ matrix with entries from $\mathbb{Q}$. The following statements are equivalent.
    \begin{itemize}
        \item[(a)] $A$ is image partition regular over $\mathbb{N}$.
        \item[(b)] For every C-set $C$ in $\mathbb{N}$, there exists $\vec{x}\in\mathbb{N}^{v}$ such that $A\vec{x}\in C^{v}$.
        \item[(c)] For every C-set  $C$ in $\mathbb{N}$, $\left\{\vec{x}\in\mathbb{N}^{v}:A\vec{x}\in C^{v}\right\}$ is a $C$-set in $\mathbb{N}^{v}$.
    \end{itemize}
\end{theorem}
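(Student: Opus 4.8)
The plan is to prove the cycle $(\mathrm c)\Rightarrow(\mathrm b)\Rightarrow(\mathrm a)\Rightarrow(\mathrm c)$, with the only substantial work in the last implication. The implication $(\mathrm c)\Rightarrow(\mathrm b)$ is immediate, since a $C$-set is a member of an idempotent ultrafilter of $\beta(\mathbb N^v)$ and hence nonempty. For $(\mathrm b)\Rightarrow(\mathrm a)$ I would first note that the family of $C$-sets of $\mathbb N$ is partition regular: by \cite[Theorem 14.14.4]{HS12}, $J(\mathbb N)$ is a compact two-sided ideal of $\beta\mathbb N$, hence a compact right topological semigroup, so by Ellis' theorem it contains an idempotent $p$; and if $\mathbb N=\bigcup_{i=1}^{r}C_i$, then the unique cell $C_i$ with $C_i\in p$ satisfies $p\in\overline{C_i}\cap J(\mathbb N)$, so that cell is a $C$-set. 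Consequently, if $A$ were not $IPR/\mathbb N$, then a finite coloring of $\mathbb N$ witnessing this would have a cell $C$ that is a $C$-set admitting no $\vec x\in\mathbb N^v$ with all entries of $A\vec x$ in $C$, contradicting $(\mathrm b)$.

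For $(\mathrm a)\Rightarrow(\mathrm c)$ I would argue in parallel with the proof of the central-set version, Theorem~\ref{imge in central}, replacing $K(\beta S)$ throughout by the ideal $J(S)$, ``central'' by ``$C$-set'', and Furstenberg's Central Sets Theorem by the strong Central Sets Theorem of \cite{DHS}. Fix a $C$-set $C$ of $\mathbb N$ and an idempotent $p\in\overline C\cap J(\mathbb N)$, and put $D:=\{\vec x\in\mathbb N^v:A\vec x\in C^u\}$. By the Hindman--Leader matrix characterization of image partition regularity (see \cite{HLS} and \cite{HS12}), there are $m_0\in\mathbb N$ and a $v\times m_0$ matrix $G_0$ with entries from $\omega$, no row of which vanishes, such that $AG_0$ is a first entries matrix — and absorbing denominators into $G_0$ we may take $AG_0$ to have entries in $\omega$. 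Replacing $G_0$ by $G:=[\,G_0\mid I_v\,]$, a $v\times m$ matrix with $m=m_0+v$, keeps $B:=AG=[\,AG_0\mid A\,]$ a first entries matrix (the leading nonzero entry of each row of $B$ still lies in the $AG_0$-block), and now $G$ has full row rank, contains the standard basis columns of $\mathbb Z^v$, and the subsemigroup $U:=G\,\mathbb N^m$ of $(\mathbb N^v,+)$ satisfies $U\supseteq\vec w_1+\mathbb N^v$ with $\vec w_1:=G_0(1,\dots,1)\in\omega^v$; in particular $U$ contains a translate of $\mathbb N^v$.

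The map $\varphi:(\mathbb N^m,+)\to(U,+)$, $\varphi(\vec y)=G\vec y$, is a surjective semigroup homomorphism, so its continuous extension $\widetilde\varphi:\beta\mathbb N^m\to\overline U\subseteq\beta\mathbb N^v$ is a homomorphism carrying idempotents to idempotents; moreover it maps $J(\mathbb N^m)$ into $J(\mathbb N^v)$. Indeed, if $r\in J(\mathbb N^m)$ and $E'\in\widetilde\varphi(r)$ then $\varphi^{-1}(E')\in r$ is a $J$-set in $\mathbb N^m$; given $g_1,\dots,g_k\in{}^{\mathbb N}(\mathbb N^v)$, one applies the $J$-set property of $\varphi^{-1}(E')$ to the lifted sequences $\tilde g_i(n):=\big((1,\dots,1),g_i(n)\big)\in\mathbb N^{m_0}\times\mathbb N^v$, for which $\varphi(\tilde g_i(n))=\vec w_1+g_i(n)$, obtaining $\vec b\in\mathbb N^m$ and $H\in\mathcal P_f(\mathbb N)$ with $\vec b+\sum_{n\in H}\tilde g_i(n)\in\varphi^{-1}(E')$ for every $i$; applying $\varphi$ then gives $\vec a+\sum_{n\in H}g_i(n)\in E'$ for every $i$, where $\vec a:=G\vec b+|H|\vec w_1$ has positive coordinates since no row of $G$ vanishes, so $E'$ is a $J$-set in $\mathbb N^v$. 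Now set $E:=\{\vec y\in\mathbb N^m:B\vec y\in C^u\}=\varphi^{-1}(D)$; granting that $E$ is a $C$-set in $\mathbb N^m$ (see the next paragraph), pick an idempotent $r\in\overline E\cap J(\mathbb N^m)$ and let $q:=\widetilde\varphi(r)$. Then $q$ is an idempotent, $q\in J(\mathbb N^v)$ by the above, and $D\in q$ because $\varphi^{-1}(D)=E\in r$; hence $q\in\overline D\cap J(\mathbb N^v)$, i.e.\ $D$ is a $C$-set in $\mathbb N^v$.

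The main obstacle — and the heart of the matter — is the step just invoked, that the full solution set $E=\{\vec y\in\mathbb N^m:B\vec y\in C^u\}$ of a first entries matrix over a $C$-set is itself a $C$-set. The elementary first entries matrix theorem only produces a single $\vec y$ with $B\vec y\in C^u$; to get $E$ into an idempotent of $J(\mathbb N^m)$ one must reproduce the internal mechanism of the strong Central Sets Theorem of \cite{DHS}, processing the columns of $B$ in order — the first-entry columns being governed by the fact that $C^\star=\{x\in C:-x+C\in p\}$ belongs to the idempotent $p$ of $J(\mathbb N)$, and the remaining columns by selecting finite index sets $H$ through the $J$-set property of the members of $p$ — exactly as in the central-set proof of Theorem~\ref{imge in central} but with $K(\beta\mathbb N)$ replaced by $J(\mathbb N)$. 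I would also emphasize that the padding $G=[\,G_0\mid I_v\,]$ is essential to the clean transfer in the previous paragraph: for a general $G_0$ the image $G_0\,\mathbb N^{m_0}$ need not even be a $J$-set in $\mathbb N^v$ (a single column of $1$'s gives the diagonal, which fails to be a $J$-set), so a $J(\mathbb N^{m_0})$-idempotent need not push forward into $J(\mathbb N^v)$; adjoining the standard basis columns forces $U$ to contain a translate of $\mathbb N^v$, after which the $J$-set transfer becomes routine.
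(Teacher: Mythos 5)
This theorem is imported verbatim from \cite{HS20}; the paper offers no proof of it, so your attempt can only be compared with the argument in that source, which it in fact tracks closely. The cycle $(\mathrm c)\Rightarrow(\mathrm b)\Rightarrow(\mathrm a)\Rightarrow(\mathrm c)$ is the right architecture, the first two implications are correct as you give them, and your handling of the pushforward in $(\mathrm a)\Rightarrow(\mathrm c)$ is careful and correct: padding $G_0$ to $G=[\,G_0\mid I_v\,]$ so that $AG$ stays a first entries matrix while $G\mathbb N^m$ contains a translate of $\mathbb N^v$, and then verifying directly that the extension $\widetilde\varphi$ carries $J(\mathbb N^m)$ into $J(\mathbb N^v)$, is exactly the kind of point that is easy to get wrong, and your counterexample (a single column of $1$'s mapping onto the diagonal) correctly explains why some such device is needed.

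The genuine gap is the step you yourself flag as the heart of the matter: that for a first entries matrix $B$ and a C-set $C$, the set $E=\{\vec y\in\mathbb N^m:B\vec y\in C^u\}$ is a C-set in $\mathbb N^m$. Everything else in your argument is soft; this statement carries the entire content of the theorem, and ``reproduce the internal mechanism of the strong Central Sets Theorem, processing the columns of $B$ in order'' is a description of a proof rather than a proof. The induction on the number of columns requires, at each stage, an idempotent $p\in J(\mathbb N)$, the set $C^\star=\{x\in C:-x+C\in p\}$ and the fact that it again lies in $p$, and the (true but not automatic) fact that $c\mathbb N$ is a C-set for each first entry $c$ of $B$ --- for instance because $\overline{c\mathbb N}$ is a closed subsemigroup of $\beta\mathbb N$ meeting $K(\beta\mathbb N)\subseteq J(\mathbb N)$, so that $\overline{c\mathbb N}\cap J(\mathbb N)$ is a compact right topological semigroup and contains an idempotent. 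None of this is carried out. You should either prove the C-set analogue of the first entries theorem (the argument of \cite[Theorem 15.5]{HS12} with $K(\beta\mathbb N)$ replaced by $J(\mathbb N)$ and the Central Sets Theorem by the stronger version of \cite{DHS}) or cite it explicitly; as written, the proposal reduces the theorem to an unproved statement of essentially the same depth.
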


There are many notions of largeness in a semigroup $S$ that have rich combinatorial properties . In \cite{H}, author has discussed about many of this notions. In \cite{BG}, V. Bergelson and D. Glasscock introduced a new notion of large sets in commutative semigroup $\left(S,+\right)$. They used matrix notation. Given a $r\times k$ matrix $M$ we denote by $m_{ij}$
the element in row $i$ and column $j$ of $M$.
\begin{definition}(\textbf{$CR$-set})
   Let $\left(S,+\right)$ be a commutative semigroup and let $A\subseteq S$.
Then $A$ is combinatorially rich set (denoted $CR$-set) if and only
if for each $k\in\mathbb{N}$, there exists $r\in\mathbb{N}$ such
that whenever $M$ is an $r\times k$ matrix with entries from $S$,
there exist $a\in S$ and nonempty $H\subseteq\left\{ 1,2,\ldots,r\right\} $
such that for each $j\in\left\{ 1,2,\ldots,k\right\} $, $$a+\sum_{t\in H}m_{t,j}\in A.$$  
\end{definition}
Let $CR\left(S\right)=\left\{p\in\beta S:\left(\forall\in p\right) \left(A\text{ is a } CR\text{ set }\right) \right\}$. By \cite[Lemma 2.14]{BG} and \cite[Theorem 3.20 Page-63]{HS12} $CR\left(S\right)\neq\emptyset$. As the family of $CR$-sets is translation invariant, then $CR\left(S\right)$ is a closed subsemigroup of $\beta S$.

\begin{definition}
   (\textbf{ Essential $CR$-set}) Let $\left(S,+\right)$ be a commutative semigroup and let $A\subseteq S$.
         $A$ is a $C$ set if and only if there is an idempotent in $\overline{A}\cap CR\left(S\right)$.
   
\end{definition}

It is know from \cite{BG} piecewise syndetic set $\implies$ $CR$-set and from definations of $CR$-set and $J$-set, we have   piecewise syndetic set $\implies$ $CR$-set $\implies$ $J$-set. And also  central set $\implies$ essential $CR$-set $\implies$ $C$-set. So from  Theorem  \ref{imge in central} snd Theorem \ref{imge in C}, we get the following theorem:

\begin{theorem}
    Let $u,v\in\mathbb{N}$ and let $A$ be a $u\times v$ matrix with entries from $\mathbb{Q}$. The following statements are equivalent.
    \begin{itemize}
        \item[(a)] $A$ is image partition regular over $\mathbb{N}$.
        \item[(b)] For every $CR$-set $C$ in $\mathbb{N}$, there exists $\vec{x}\in\mathbb{N}^{v}$ such that $A\vec{x}\in C^{v}$.
        
    \end{itemize}
\end{theorem}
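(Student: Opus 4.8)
The plan is to prove the two implications separately, sandwiching the notion appearing in $(b)$ between that of central set and that of $C$-set and then appealing to Theorems~\ref{imge in central} and~\ref{imge in C}.

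For $(b)\Rightarrow(a)$ the argument is short. Every central set in $\mathbb{N}$ is piecewise syndetic, and every piecewise syndetic set is a $CR$-set; hence every central set in $\mathbb{N}$ is a $CR$-set. Therefore, if $(b)$ holds then in particular for every central set $C$ in $\mathbb{N}$ there is $\vec{x}\in\mathbb{N}^{v}$ with every entry of $A\vec{x}$ in $C$, which is exactly condition $(b)$ of Theorem~\ref{imge in central}. That theorem then yields that $A$ is image partition regular over $\mathbb{N}$, which is $(a)$.

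For $(a)\Rightarrow(b)$ I would begin from Theorem~\ref{imge in C}: since $A$ is image partition regular over $\mathbb{N}$, for every $C$-set $D$ in $\mathbb{N}$ there is $\vec{x}\in\mathbb{N}^{v}$ with every entry of $A\vec{x}$ in $D$. When the set $C$ in $(b)$ happens to be an essential $CR$-set this finishes the argument at once, since an essential $CR$-set is a $C$-set. For a general $CR$-set $C$ the route is instead to prove the sharper statement, parallel to part~$(c)$ of Theorems~\ref{imge in central} and~\ref{imge in C}, that the pre-image $E=\{\vec{x}\in\mathbb{N}^{v}:\text{every entry of }A\vec{x}\text{ lies in }C\}$ is itself a $CR$-set in $\mathbb{N}^{v}$, and hence in particular nonempty. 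To verify that $E$ is a $CR$-set one would feed a candidate witnessing matrix with entries in $\mathbb{N}^{v}$ into the equivalent matrix-type description of $CR$-sets from \cite{HHST} announced in the abstract, clear the denominators of $A$, apply $A$ coordinate-by-coordinate to replace it by a matrix with entries in $\mathbb{N}$, use the $CR$-property of $C$ on that matrix to obtain a scalar shift together with a common nonempty finite index set, and finally use the image partition regularity of $A$ over $\mathbb{N}$ to promote the scalar shift to a vector shift in $\mathbb{N}^{v}$; reassembling the pieces produces the witness for $E$, and the case $k=1$ already gives $E\neq\emptyset$.

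The step I expect to be the main obstacle is precisely this passage from general $CR$-sets to the setting covered by Theorem~\ref{imge in C}: turning the single witnessing constant produced by the $CR$-property of $C$ into a pre-image vector under $A$ while keeping a single index set valid in all $u$ coordinates, together with the accompanying bookkeeping (clearing the denominators of $A$, choosing the number of rows large enough relative to the number of columns and to the size of $A$, and carrying everything through the product semigroup $\mathbb{N}^{v}$). This is where the equivalent characterization of $CR$-sets from \cite{HHST} is indispensable, where one genuinely needs $A$ to be image partition regular over $\mathbb{N}$ rather than only over $\mathbb{Z}$, and where one must also use that the family of $CR$-sets is partition regular (\cite[Lemma~2.14]{BG}) and that $CR(S)$ is a closed subsemigroup of $\beta S$, so that exhibiting the witnesses really does certify that $E$ is a $CR$-set.
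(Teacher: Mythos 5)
Your direction $(b)\Rightarrow(a)$ is correct and is essentially what the paper intends: central sets are piecewise syndetic, piecewise syndetic sets are $CR$-sets, so condition $(b)$ applied to central sets is condition (b) of Theorem~\ref{imge in central}, which gives $(a)$. (The paper offers no separate proof of this theorem; it asserts that both directions follow from Theorems~\ref{imge in central} and~\ref{imge in C} together with the chains ``piecewise syndetic $\Rightarrow$ $CR$ $\Rightarrow$ $J$'' and ``central $\Rightarrow$ essential $CR$ $\Rightarrow$ $C$-set''.)

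The direction $(a)\Rightarrow(b)$ in your write-up has a genuine gap. You correctly note that Theorem~\ref{imge in C} only disposes of the case where $C$ is an essential $CR$-set (hence a $C$-set), and that a general $CR$-set need not be a $C$-set; that observation is sharp. But the repair you propose --- proving that $E=\{\vec{x}\in\mathbb{N}^{v}:A\vec{x}\in C^{u}\}$ is itself a $CR$-set --- is precisely the paper's Question~1.10, which the author explicitly declares open, and the sketch you give breaks at the step ``use the image partition regularity of $A$ over $\mathbb{N}$ to promote the scalar shift to a vector shift.'' The $CR$-property of $C$ produces a \emph{scalar} $a$ with $a+\sum_{t\in H}(\cdots)\in C$ in each of the $u$ coordinates; to turn this into a member of $E$ you need $\vec{x}$ with $A\vec{x}=\overline{a}$, the constant vector. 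Image partition regularity over $\mathbb{N}$ does not supply such an $\vec{x}$: this solvability of $A\vec{x}=\overline{a}$ for every $a$ is exactly the additional hypothesis that the paper's Theorem~2.3 and Theorems~3.4 and~3.7 of \cite{HS23} must assume, and it fails for genuinely $IPR/\mathbb{N}$ matrices --- e.g.\ for the Schur matrix with rows $(1,0)$, $(0,1)$, $(1,1)$, the equation $A\vec{x}=\overline{a}$ forces $a=0$. So your argument does not close the case of a non-essential $CR$-set. (I would add that the paper's own one-line derivation is equally silent on this case --- the quoted implication chains only yield $(a)\Rightarrow(b)$ for essential $CR$-sets --- so the difficulty you located is real; but the route you chose to resolve it does not work, and the final appeal to partition regularity of the $CR$ family and to $CR(S)$ being a closed subsemigroup does nothing to certify $CR$-ness once explicit witnesses are the goal.)
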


We do not know the answer of the following naturally expected question:
\begin{question}
    Let $u,v\in\mathbb{N}$ and let $A$ be a $u\times v$ matrix with entries from $\mathbb{Q}$, which is image partition regular over $\mathbb{N}$.  Is for every $CR$-set  $C$ in $\mathbb{N}$, $\left\{\vec{x}\in\mathbb{N}^{v}:A\vec{x}\in C^{v}\right\}$  a $CR$-set in $\mathbb{N}^{v}$?
    
\end{question}

Given $n\in\mathbb{Z}$, we write $\overline{n}$ for the vector of the appropriate size all of whose entries are equal to $n$. For some certain class of matrices N. Hindman and D. Strauss  established the following two consecutive theorems for piecewise syndetic sets and $J$-sets.

\begin{theorem}\cite[Theorem 3.4]{HS23}
  Let $u,v\in\mathbb{N}$, let $A$ be a $u\times v$ matrix with rational
entries, and assume that for each $n\in\mathbb{Z}$, there exists
$\overrightarrow{z}\in\mathbb{Z}^{v}$ such that $A\overrightarrow{z}=\overline{n}\in\mathbb{Z}^{u}$.
If $B$ is piecewise syndetic subset of $\mathbb{Z}$, then $\left\{ \overrightarrow{x}\in\mathbb{Z}^{v}:A\overrightarrow{x}\in B^{u}\right\} $
is piecewise syndetic in $\mathbb{Z}^{v}$.   
\end{theorem}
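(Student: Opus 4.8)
I would use throughout the characterisation ``$X$ is piecewise syndetic in a commutative semigroup $S$ iff $\overline{X}\cap K(\beta S)\neq\emptyset$'', together with the standard fact that if $\varphi\colon S\to T$ is a surjective homomorphism then $\widetilde{\varphi}\colon\beta S\to\beta T$ is a surjective homomorphism carrying $K(\beta S)$ onto $K(\beta T)$; in particular, preimages of piecewise syndetic sets under surjective homomorphisms are again piecewise syndetic. The strategy has three steps: clear denominators; peel off the matrix through the surjection onto its image and thereby reduce the theorem to a single statement about that image subgroup; prove that statement.

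\emph{Reduction.} Choose $q\in\mathbb{N}$ with $P:=qA\in M_{u\times v}(\mathbb{Z})$. Since $qB\subseteq q\mathbb{Z}$, for $\vec{x}\in\mathbb{Z}^{v}$ one has $A\vec{x}\in B^{u}$ if and only if $P\vec{x}\in(qB)^{u}$, so $C:=\{\vec{x}\in\mathbb{Z}^{v}:A\vec{x}\in B^{u}\}=P^{-1}\big((qB)^{u}\big)$. Let $\Lambda:=P(\mathbb{Z}^{v})\le\mathbb{Z}^{u}$, so that $P\colon\mathbb{Z}^{v}\twoheadrightarrow\Lambda$ is a surjective group homomorphism and $C=P^{-1}\big((qB)^{u}\cap\Lambda\big)$. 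By hypothesis there is $\vec{z}_{0}$ with $A\vec{z}_{0}=\overline{1}$, hence $P\vec{z}_{0}=\overline{q}\in\Lambda$ and therefore $\mathbb{Z}\,\overline{q}\subseteq\Lambda$: the full diagonal line lies inside the image. By the remark above, to prove that $C$ is piecewise syndetic in $\mathbb{Z}^{v}$ it suffices to prove
\[(\ast)\qquad (qB)^{u}\cap\Lambda\ \text{is piecewise syndetic in}\ \Lambda .\]

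\emph{The core.} For $(\ast)$, fix a finite interval $G_{0}\subseteq\mathbb{Z}$ with $B^{\ast}:=\bigcup_{t\in G_{0}}(B-t)$ thick (possible as $B$ is piecewise syndetic), and note that the diagonal vectors $\overline{qt}$ for $t\in G_{0}$ all lie in $\mathbb{Z}\,\overline{q}\subseteq\Lambda$. The plan is to witness piecewise syndeticity of $(qB)^{u}\cap\Lambda$ by a finite set of translations in $\Lambda$ of two types. The diagonal translations $\overline{qt}$, $t\in G_{0}$, shift all $u$ coordinates simultaneously, turning the joint condition ``all coordinates of $P\vec{x}$ in $qB$'' into ``all coordinates in a common translate of $qB$''; this transports the one-dimensional thickness of $B^{\ast}$ into $\Lambda$ along the diagonal, and a $G_{0}$-thickening along the diagonal then fills a long block of each diagonal fibre. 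The second type is a finite set of representatives for the finitely many classes of $\Lambda$ over the diagonal direction (precisely, for $\Lambda/(\Lambda\cap q\mathbb{Z}^{u})$ and, after dividing by $q$, for the relevant quotient by $\overline{1}$), chosen so that after such a translation every fibre of $\Lambda$ over the diagonal carries an intersection of finitely many translates of $B$ that is large enough to be thickened to a thick set. Spreading over the finitely many fibres then yields a thick subset of $\Lambda$, giving $(\ast)$.

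\emph{Main obstacle.} The difficulty is concentrated in the last step: intersecting a piecewise syndetic set with a subgroup need not remain piecewise syndetic, and $\Lambda$ may have rank greater than $1$, so $(\ast)$ cannot be obtained from the one-dimensional statement for $B$ nor by arguing one coordinate at a time. Some genuine recurrence is needed to see that, off the diagonal, a syndetic set of fibres of $\Lambda$ supports a sufficiently regular intersection of translates of $B$. This is precisely where the hypothesis enters: it forces $\mathbb{Z}\,\overline{q}\subseteq\Lambda$, and the diagonal is the only direction along which translating preserves the product shape $(qB)^{u}$, hence the only direction along which thickness can be moved freely; the remaining, non-diagonal directions are finitely many up to the lattice and have to be absorbed by finitely many auxiliary translations. (Without the diagonal hypothesis the conclusion genuinely fails, for instance when the column span of $A$ meets the diagonal only at $\vec{0}$.) An alternative, dual attack on $(\ast)$ is to build a minimal ultrafilter on $\Lambda$ of the form $e+\widetilde{\delta}(p)$, where $p\in K(\beta\mathbb{Z})$ contains $B$, $\delta(n)=\overline{qn}$, and $e$ is a minimal idempotent of $\beta\Lambda$; this is transparent when $B$ is central, and the same obstacle reappears when $B$ is merely piecewise syndetic.
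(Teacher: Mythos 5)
This theorem is quoted in the paper from \cite{HS23} without proof, so there is no in-paper argument to measure you against; I assess the proposal on its own terms. Your reduction step is sound: clearing denominators, factoring through the surjection $P\colon\mathbb{Z}^{v}\twoheadrightarrow\Lambda=P(\mathbb{Z}^{v})$, and using that preimages of piecewise syndetic sets under surjective homomorphisms are piecewise syndetic (since $\widetilde{P}(K(\beta\mathbb{Z}^{v}))$ is an ideal of $\beta\Lambda$ and hence contains $K(\beta\Lambda)$) correctly reduces the theorem to your claim $(\ast)$ that $(qB)^{u}\cap\Lambda$ is piecewise syndetic in $\Lambda$. But $(\ast)$ is equivalent to the theorem itself, and the ``core'' paragraph does not prove it. Two concrete defects: first, when $\Lambda$ has rank at least $2$ the quotient of $\Lambda$ by the diagonal line $\mathbb{Z}\overline{q}$ is infinite, so there are infinitely many ``fibres of $\Lambda$ over the diagonal''; the finite quotient $\Lambda/(\Lambda\cap q\mathbb{Z}^{u})$ you invoke is a different object, and ``spreading over the finitely many fibres'' is not available. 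Second, the assertion that finitely many auxiliary translations can be ``chosen so that every fibre carries an intersection of finitely many translates of $B$ that is large enough to be thickened to a thick set'' is precisely the hard recurrence statement: an intersection of translates of a merely piecewise syndetic set can be empty, and you concede yourself that this step is only ``transparent when $B$ is central.'' As written, the proposal is a correct reformulation followed by an accurate description of the remaining difficulty, not a proof.

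The idea you are missing is the standard fact that a subset $B$ of $\mathbb{Z}$ is piecewise syndetic if and only if some translate $-n+B$ is central. Indeed, pick $p\in\overline{B}\cap K(\beta\mathbb{Z})$, let $L$ be the minimal left ideal containing $p$ and $e$ an idempotent in $L$; then $p=p+e$, so $\{n\in\mathbb{Z}:-n+B\in e\}\in p$ is nonempty, and for any such $n$ the set $-n+B$ is central. Applying the central-set version of the preservation theorem under the same hypothesis on $A$ (the companion result in \cite{HS23}, in the spirit of Theorem \ref{imge in central}) to $-n+B$ shows that $\{\vec{x}\in\mathbb{Z}^{v}:A\vec{x}\in(-n+B)^{u}\}$ is central, hence piecewise syndetic, in $\mathbb{Z}^{v}$; the hypothesis supplies $\vec{z}$ with $A\vec{z}=\overline{n}$, so this set equals $-\vec{z}+\{\vec{x}:A\vec{x}\in B^{u}\}$, and piecewise syndeticity is translation invariant. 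Your ``dual attack'' paragraph circles exactly this point (you note the central case is transparent) but never closes the loop by trading piecewise syndeticity of $B$ for centrality of one of its translates; with that exchange the whole fibre-by-fibre analysis of $\Lambda$ becomes unnecessary.
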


\begin{theorem}\label{image in J}\cite[Theorem 3.7]{HS23}.
    Let $u,v\in\mathbb{N}$, let $A$ be a $u\times v$ matrix with rational
entries and let $B$ be a $J$-set in $\mathbb{Z}$. Assume that for
each $a\in\mathbb{Z}$, there exists $\overrightarrow{x}\in\mathbb{Z}^{v}$
such that $A\overrightarrow{x}=\overline{a}\in\mathbb{Z}^{u}$. Then
$\left\{ \overrightarrow{y}\in\mathbb{Z}^{v}:A\overrightarrow{y}\in B^{u}\right\} $
is a $J$-set in $\mathbb{Z}^{v}$. 
\end{theorem}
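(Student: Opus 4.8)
The goal is to verify the defining property of a $J$-set for the set $C:=\{\vec{y}\in\mathbb{Z}^{v}:A\vec{y}\in B^{u}\}$: given an arbitrary finite family $F=\{f_{1},\dots,f_{m}\}\subseteq{}^{\mathbb{N}}\mathbb{Z}^{v}$, we must produce $\vec{a}\in\mathbb{Z}^{v}$ and a finite $H\subseteq\mathbb{N}$ with $A\vec{a}+\sum_{n\in H}Af_{i}(n)\in B^{u}$ for every $i\in\{1,\dots,m\}$. The plan is to pass each $Af_{i}$ to its $u$ coordinate sequences valued in $\mathbb{Z}$, feed this finite batch of $\mathbb{Z}$-valued sequences into the hypothesis that $B$ is a $J$-set in $\mathbb{Z}$ to obtain a single base point $b\in\mathbb{Z}$ together with a common finite index set, and then use the solvability hypothesis on $A$ to choose $\vec{a}$ with $A\vec{a}=\overline{b}$. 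The snag is that $A$ has genuinely rational entries, so the vectors $Af_{i}(n)$, and hence those coordinate sequences, need not lie in $\mathbb{Z}^{u}$.

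To get around this I would first fix $D\in\mathbb{N}$ such that $DA$ is an integer matrix, so that $n\mapsto D\,(Af_{i}(n))_{\ell}$ is $\mathbb{Z}$-valued for every $i\le m$ and $\ell\le u$. By the pigeonhole principle there is an infinite set $M\subseteq\mathbb{N}$ on which all of these finitely many sequences are constant modulo $D$. I would then list $M$ in increasing order and cut it into consecutive blocks $E_{1},E_{2},\dots$, each of size exactly $D$. Because the relevant sequences are constant mod $D$ along $M$ and each block has $D$ elements, $\sum_{n\in E_{j}}D\,(Af_{i}(n))_{\ell}$ is divisible by $D$, so $\sum_{n\in E_{j}}(Af_{i}(n))_{\ell}\in\mathbb{Z}$. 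Consequently the grouped sequences $\widehat{f}_{i}(j):=\sum_{n\in E_{j}}f_{i}(n)\in\mathbb{Z}^{v}$ satisfy $A\widehat{f}_{i}(j)\in\mathbb{Z}^{u}$, and $\widehat{g}_{i,\ell}(j):=(A\widehat{f}_{i}(j))_{\ell}$ defines $mu$ genuinely $\mathbb{Z}$-valued sequences indexed by $j\in\mathbb{N}$.

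I would then apply the hypothesis that $B$ is a $J$-set in $\mathbb{Z}$ to the finite family $\{\widehat{g}_{i,\ell}:i\le m,\ \ell\le u\}\subseteq{}^{\mathbb{N}}\mathbb{Z}$, getting $b\in\mathbb{Z}$ and a finite $\widehat{H}\subseteq\mathbb{N}$ with $b+\sum_{j\in\widehat{H}}\widehat{g}_{i,\ell}(j)\in B$ for all $i,\ell$. By the solvability assumption on $A$, pick $\vec{a}\in\mathbb{Z}^{v}$ with $A\vec{a}=\overline{b}\in\mathbb{Z}^{u}$, and set $H:=\bigcup_{j\in\widehat{H}}E_{j}$, a finite subset of $\mathbb{N}$. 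Since the $E_{j}$ are pairwise disjoint, $\sum_{n\in H}Af_{i}(n)=\sum_{j\in\widehat{H}}A\widehat{f}_{i}(j)$, so the $\ell$-th coordinate of $A\vec{a}+\sum_{n\in H}Af_{i}(n)$ equals $b+\sum_{j\in\widehat{H}}\widehat{g}_{i,\ell}(j)\in B$. Hence $A\bigl(\vec{a}+\sum_{n\in H}f_{i}(n)\bigr)\in B^{u}$ for each $i$, that is, $\vec{a}+\sum_{n\in H}f_{i}(n)\in C$, which is exactly what the $J$-set condition for $C$ demands.

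I expect the only step with genuine content to be the grouping-modulo-$D$ device in the second paragraph; this is where the (modest) main obstacle lies, since it must simultaneously clear denominators and keep every quantity expressed as a finite sum of values of the sequences. It is essentially the same manoeuvre that shows a dilate $dB$ of a $J$-set $B$ is again a $J$-set. Everything else is bookkeeping. In particular the solvability hypothesis on $A$ is used exactly once, to convert the single base point $b$ produced by the $J$-set property of $B$ into a vector $\vec{a}\in\mathbb{Z}^{v}$ with $A\vec{a}=\overline{b}$, which is what allows one value $b$ to serve all $u$ coordinates at the same time; without it the $u$ components of $A\vec{a}$ could not in general be aligned with a common element of $B$.
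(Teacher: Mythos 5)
Your proof is correct and follows essentially the same route as the paper's template (Lemma \ref{main lemma} plus the argument of the $CR$-theorem, which mirrors the cited Hindman--Strauss proof): group the indices into blocks whose coordinate sums clear the denominators of $A$, feed the resulting $mu$ integer-valued sequences $j\mapsto\bigl(A\widehat{f}_{i}(j)\bigr)_{\ell}$ to the $J$-set property of $B$ in $\mathbb{Z}$, and use the solvability hypothesis exactly once to lift the base point $b$ to $\vec{a}$ with $A\vec{a}=\overline{b}$. The only difference is where the pigeonhole is run: you apply it to the $mu$ output sequences $D\,(Af_{i}(\cdot))_{\ell}$ modulo $D$ and chop one infinite monochromatic set into blocks of size $D$, while the paper's lemma runs a nested pigeonhole on the $mv$ input coordinate sequences inside bounded windows of length $d^{mv}(d-1)+1$ --- extra care that is irrelevant for $J$-sets but is exactly what the $CR$-set analogue needs, since there $H$ must lie in $\{1,\dots,r\}$ for a uniform $r$.
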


In the next section, we prove an analog result of the above theorem for $CR$-sets and essential $CR$-sets.

\section{Largeness of image in combinatorially rich sets}
As we promised to prove an analog result of Theorem \ref{image in J} for $CR$-set, we first established a variant of the following lemma:  

\begin{lemma}\cite[Lemma 3.3 ]{HS20}
    Let $F\in\mathcal{P}_{f}\left({}^{\mathbb{N}}\mathbb{Z}\right)$ and
let $d\in\mathbb{N}$. Then there exist a sequence $\left\{ K_{n}\right\} _{n=1}^{\infty}$
of finite sets of $\mathbb{N}$ and $k\in\mathbb{N}$ such that for
each $n\in\mathbb{N}$, $\max K_{n}<\min K_{n+1}$ so that for all $f\in F$
, $\sum_{t\in K_{n}}f\left(t\right)\in d\mathbb{Z}$.
\end{lemma}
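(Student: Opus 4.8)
The plan is to obtain the blocks $K_n$ by an infinitary pigeonhole argument on partial sums modulo $d$, and then to verify the divisibility by telescoping. First I would consider the map $\sigma \colon \mathbb{N}\cup\{0\} \to (\mathbb{Z}/d\mathbb{Z})^{F}$ given by $\sigma(0) = \vec 0$ and
\[
\sigma(m) = \Bigl( \sum_{t=1}^{m} f(t) + d\mathbb{Z} \Bigr)_{f \in F} \qquad (m \ge 1).
\]
Since $(\mathbb{Z}/d\mathbb{Z})^{F}$ is finite (of cardinality $d^{|F|}$), some value $\vec c$ is attained by $\sigma$ infinitely often, so I can fix an infinite set $\{a_0 < a_1 < a_2 < \cdots\} \subseteq \mathbb{N}\cup\{0\}$ on which $\sigma$ is constantly equal to $\vec c$.

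Next I would set $k = a_0 + 1$ and, for each $n \in \mathbb{N}$, $K_n = \{a_{n-1}+1, a_{n-1}+2, \ldots, a_n\}$. Each $K_n$ is then a nonempty finite subset of $\{k, k+1, \ldots\} \subseteq \mathbb{N}$ (nonempty because $a_{n-1} < a_n$), and $\max K_n = a_n < a_n + 1 = \min K_{n+1}$, so the blocks are ordered as required. For the divisibility, fix $f \in F$ and $n \in \mathbb{N}$; telescoping gives
\[
\sum_{t \in K_n} f(t) \;=\; \sum_{t=1}^{a_n} f(t) \;-\; \sum_{t=1}^{a_{n-1}} f(t),
\]
and since the $f$-coordinates of $\sigma(a_n)$ and $\sigma(a_{n-1})$ agree (both equal the $f$-coordinate of $\vec c$), this difference is divisible by $d$, i.e.\ lies in $d\mathbb{Z}$. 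That completes the argument.

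I do not expect a genuine obstacle here: the proof is just pigeonhole plus a telescoping identity, and the only things requiring (routine) care are the index bookkeeping — nonemptiness of the $K_n$, the strict inequality $\max K_n < \min K_{n+1}$, and containment in $\{k, k+1, \ldots\}$ — together with pinning down the auxiliary constant $k$, which I read as $\min K_1$. The same template should drive the variant of this lemma that the paper actually needs for $CR$-sets: whatever finite family of congruence conditions the matrix $A$ forces, one enlarges the target of $\sigma$ to the corresponding finite product of residue class groups and reruns the pigeonhole step without change.
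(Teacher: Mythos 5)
Your proof is correct for the lemma as stated. Note that the paper does not actually prove this lemma --- it is quoted from Hindman--Strauss --- but it does prove its own strengthened variant (Lemma \ref{main lemma}, where in addition $K_{n}\subseteq\{(n-1)k+1,\ldots,nk\}$ with $k=d^{|F|}(d-1)+1$), and your route is genuinely different from that one. The paper pigeonholes on the residues of the individual values $f_{i}(t)$: inside each length-$k$ window it extracts nested sets $H_{1}\supseteq H_{2}\supseteq\cdots\supseteq H_{m}$ so that every $f_{i}$ is constant mod $d$ on the final $d$-element set, whence each block sum is a sum of $d$ equal residues and hence divisible by $d$. You instead pigeonhole once on the partial-sum vector $\sigma(m)\in(\mathbb{Z}/d\mathbb{Z})^{F}$ and telescope, which is cleaner and handles all $f\in F$ simultaneously. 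Two remarks. First, the constant $k$ in the quoted statement is vestigial as transcribed (in the source it bounds $|K_{n}|$); reading it as $\min K_{1}$ is a reasonable patch but not what the original intends. Second, and more relevant to what the paper needs downstream: as literally written, your construction (fixing one value $\vec{c}$ attained infinitely often) gives no control on $|K_{n}|$ or on where $K_{n}$ lies, since consecutive occurrences of $\vec{c}$ may be arbitrarily far apart; so it proves the transcribed statement but not the localized variant. The fix is to apply the finite pigeonhole within each window: among the $d^{|F|}+1$ values $\sigma((n-1)k),\sigma((n-1)k+1),\ldots,\sigma(nk)$ with $k=d^{|F|}$, two must coincide, yielding $K_{n}\subseteq\{(n-1)k+1,\ldots,nk\}$ --- in fact with a better constant than the paper's $k=d^{|F|}(d-1)+1$. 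So your template does drive the variant, as you anticipate, but only after replacing the infinitary pigeonhole by a windowed finite one.
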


If we pay  attention to the proof of Theorem \ref{image in J} in \cite[Theorem 3.7]{HS23}, we observe that the above lemma plays a crucial role  proving that theorem.

\begin{lemma}\label{main lemma}
    Let $F\in\mathcal{P}_{f}\left(\mathbb{N}_{\mathbb{Z}}\right)$ and
let $d\in\mathbb{N}$. Then there exist a sequence $\left\{ K_{n}\right\} _{n=1}^{\infty}$
of finite sets of $\mathbb{N}$ and $k\in\mathbb{N}$ such that for
each $n\in\mathbb{N}$, $\max K_{n}<\min K_{n+1}$ so that for all $f\in F$
,  $\sum_{t\in K_{n}}f(t)\in d\mathbb{Z}$  with $$K_{n}\subset\left\{ (n-1)k+1,(n-1)k+2,\ldots,nk\right\} $$ where
$k=d^{|F|}(d-1)+1$.
\end{lemma}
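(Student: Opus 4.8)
The plan is to treat each block $B_n:=\{(n-1)k+1,(n-1)k+2,\ldots,nk\}$ in isolation and to extract $K_n\subseteq B_n$ from a single application of the pigeonhole principle inside the finite abelian group $(\mathbb{Z}/d\mathbb{Z})^{F}$; working in this product group controls every member of $F$ simultaneously, so no induction on $F$ is needed. This is exactly the block-confined strengthening of the partial-sum argument behind \cite[Lemma 3.3]{HS20}.

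Fix $n$ and list the elements of $B_n$ in increasing order as $b_1<b_2<\cdots<b_k$. For $0\le j\le k$ set
$$
\vec{s}_j=\left(\,\textstyle\sum_{i=1}^{j}f(b_i)\bmod d\,\right)_{f\in F}\in(\mathbb{Z}/d\mathbb{Z})^{F},
$$
so that $\vec{s}_0=\vec{0}$; these are $k+1$ elements of a set of size $d^{|F|}$. The only arithmetic input is that $k=d^{|F|}(d-1)+1\ge d^{|F|}$ for every $d\ge 1$ (equality when $d=1$, and $k\ge d^{|F|}+1$ when $d\ge 2$), so $k+1>d^{|F|}$ and by pigeonhole there are indices $0\le p<q\le k$ with $\vec{s}_p=\vec{s}_q$.

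I would then take $K_n=\{b_{p+1},b_{p+2},\ldots,b_q\}$. It is nonempty because $p<q$, it is contained in $B_n$, and for each $f\in F$ the telescoping identity $\sum_{t\in K_n}f(t)=\sum_{i=1}^{q}f(b_i)-\sum_{i=1}^{p}f(b_i)$ together with $\vec{s}_p=\vec{s}_q$ gives $\sum_{t\in K_n}f(t)\in d\mathbb{Z}$. Finally $\max K_n\le\max B_n=nk<nk+1=\min B_{n+1}\le\min K_{n+1}$, so $\max K_n<\min K_{n+1}$, and the sequence $\{K_n\}_{n=1}^{\infty}$ has all the properties demanded in the statement.

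I do not expect a genuine obstacle: the main thing to get right is the size estimate together with the requirement that $K_n$ live in the $n$-th block of length $k$, and beyond that this is just the classical ``a partial sum repeats'' argument run once in the product group. In fact any block length $k\ge d^{|F|}$ would already work, and the value $k=d^{|F|}(d-1)+1$ in the statement is simply the convenient choice that feeds into the $CR$-set argument of the next section, where the blocks $B_n$ group rows of the matrix. (An alternative proof by induction on $|F|$, obtained by splitting each $B_n$ into $d$ consecutive sub-blocks, solving for the first $|F|-1$ functions on each block, and picking a consecutive run of those sub-solutions whose last-function partial sum vanishes modulo $d$, yields the recursion $k_m=d\,k_{m-1}$ with $k_0=1$, hence once more $k=d^{|F|}$; the product-group version above is thus both cleaner and at least as strong, so that is the route I would take.)
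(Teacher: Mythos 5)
Your proof is correct, but it runs along a genuinely different line from the one in the paper. The paper iterates the pigeonhole principle $|F|$ times on residue classes: starting from the block of $k_1=d^{m}(d-1)+1$ indices it extracts $H_1\supseteq H_2\supseteq\cdots\supseteq H_m$ with $|H_j|=d^{m-j}(d-1)+1$, on each of which $f_1,\ldots,f_j$ are constant modulo $d$, and ends with a set of exactly $d$ indices on which every $f\in F$ is constant modulo $d$, so that each sum is $d$ times a single residue and hence lies in $d\mathbb{Z}$. You instead apply the pigeonhole principle once, to the $k+1$ prefix sums in the product group $\left(\mathbb{Z}/d\mathbb{Z}\right)^{F}$, and take $K_n$ to be the interval between two coinciding prefixes. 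Your route is shorter, handles all $f\in F$ in one stroke, yields $K_n$ as a block of consecutive indices, and in fact shows that any block length $k\geq d^{|F|}$ suffices, which is sharper than the $d^{|F|}(d-1)+1$ demanded by the statement (and which your argument of course still accommodates, since the divisibility conclusion only gets easier with a longer block). The paper's method buys the extra information that $|K_n|=d$ with all values of each $f$ congruent on $K_n$, but nothing in the subsequent application uses that, so your version loses nothing. Your size check is right ($k+1>d^{|F|}$ in all cases, including $d=1$), and the block-disjointness immediately gives $\max K_n<\min K_{n+1}$, so there is no gap.
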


\begin{proof}
Let $|F|=m$, and $F=\left\{ \langle f_{i}\left(t\right)\rangle _{t=1}^{\infty}:i\in\left\{ 1,2,\ldots,m\right\} \right\} $. Let $k_{1}=d^{m}(d-1)+1$. Define  a partition  function $\chi_{1}$ on the set $\left\{ f_{1}(t):t\in\left\{ 1,2,\ldots,k\right\} \right\}$ by
$$
\chi_{1}:\left\{ f_{1}(t):t\in\left\{ 1,2,\ldots,k_{1}\right\} \right\} \rightarrow\left\{ 0,1,\ldots,d-1\right\} 
$$
 $\chi\left(f_{1}\left(t\right)\right)=r$, where $r$ is the remainder
, when we divide $f_{1}\left(t\right)$ by $d$.

By the pigeonhole principle, there exists
a subset $H_{1}\subset\left\{ 1,2,\ldots,k_{1}\right\} $, for which
$\chi\left(f_{1}\left(t\right)\right)=r$ for some $r\in\left\{ 0,1,\ldots,d-1\right\} $ and
for all $t\in H_{1}$ with $|H_{1}|=d^{m-1}(d-1)+1$.

So, we can find
$H_{1}\subset\left\{ 1,2,\ldots,k_{1}\right\} $, with $|H_{1}|=d^{m-1}(d-1)+1$
such that, $d$ divides $\sum_{t\in G_{1}}f_{1}(t)$ for all $G_{1}\subset H_{1}$
with $|G_{1}|=d$.

Using the same technique, we can find $H_{2}\subset H_{1}$,
with $|H_{2}|=d^{m-2}(d-1)+1$, such that $d$ divide $\sum_{t\in G_{2}}f_{2}(t)$
and $\sum_{t\in G_{2}}f_{1}(t)$ for all $G_{2}\subset H_{2}$ with
$|G_{2}|=d$ . Continuing, this process we can find $K_{1}\subset\left\{ 0,1,\ldots,k_{1}=d^{m}(d-1)+1\right\} $
such that, for each $f\in F$,  $d$ divides $\sum_{t\in K_{1}}f(t)$.

 Now consider the set $\left\{ k_{1}+1,k_{1}+2,\ldots,2k_{1}\right\} $ instead of $\left\{ 1,2,\ldots,k_{1}\right\}$ and by the same above argument   we can find $K_{2}\subset\left\{ k_{1}+1,k_{2}+2,\ldots,2k_{2}\right\} $
such that , for each $f\in F$ , $d$ divides $\sum_{t\in K_{2}}f(t)$. Obviously $\max K_{1}<\min K_{2}$.

Continuing the process, we get for each $n\in\mathbb{N}$, $\max K_{n}<\min K_{n+1}$
and for each $f\in F$, $d$ divides $\sum_{t\in K_{n}}f(t)$ with
$$
K_{n}\subset\left\{ (n-1)k+1,(n-1)k+2,\ldots,nk\right\} 
$$
 where $k=k_{1}=d^{|F|}(d-1)+1$.
\end{proof}

Now we state the definition of $CR$-set in \cite{HHST} by  N. Hindman, H. Hosseini, D. Strauss and M. Tootkaboni that is equivalent to $CR$-set defined by V. Bergelson and G. Glasscock in \cite[Definition 2.8]{BG} for commutative semigroups.

\begin{definition} Let $\left(S,+\right)$ be a commutative semigroup and $A\subseteq S$. Let  $k\in\mathbb{N}$.
\begin{itemize}
    \item[(a)](\textbf{$k$-$CR$-set}) A is a $k$-$CR$-set if and only if there
exists $r\in\mathbb{N}$ such that whenever $F\in\mathcal{P}_{f}\left({}^{\mathbb{N}}S\right)$,
with $|F|\le k$, there exist $a\in S$ and $H\in\mathcal{P}_{f}\left(\left\{ 1,2,\ldots,r\right\} \right)$
such that for all $f\in F$, $$a+\sum_{t\in H}f(t)\in A.$$ 
\item[(b)] A is a $CR$-set if and only if for each $k\in\mathbb{N}$,
it is a $k$-$CR$-set.

\end{itemize}

\end{definition}
 The technique of proof of the following theorem is the almost same as \cite[Theorem 3.7]{HS23}.
\begin{theorem}
    Let $m,u,v\in\mathbb{N}$, let $A$ be a $u\times v$ matrix with rational
entries and let $B$ be a $mu$-$CR$-set in $\mathbb{Z}$. Assume that
for each $a\in\mathbb{Z}$, there exists $\vec{x}\in\mathbb{Z}^{v}$
such that $A\vec{x}=\overline{a}\in\mathbb{Z}^{u}$. Then
$\left\{ \vec{y}\in\mathbb{Z}^{v}:A\vec{y}\in B^{u}\right\} $
is a $m$-$CR$-set in $\mathbb{Z}^{v}$. 
\end{theorem}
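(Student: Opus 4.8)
The plan is to verify the $m$-$CR$ condition for $C:=\{\vec y\in\mathbb Z^{v}:A\vec y\in B^{u}\}$ directly from the definition, exhibiting one $r\in\mathbb N$ that works simultaneously for all families $F\in\mathcal P_f\bigl({}^{\mathbb N}\mathbb Z^{v}\bigr)$ with $|F|\le m$. First I would fix $d\in\mathbb N$ so that every entry of $dA$ lies in $\mathbb Z$, and set $\ell=d^{mu}(d-1)+1$. Since $B$ is an $mu$-$CR$-set in $\mathbb Z$, fix $r_{0}\in\mathbb N$ witnessing this (uniformly over all scalar families of size $\le mu$). I claim $r=r_{0}\ell$ works.

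So let $F=\{\vec f_{1},\dots,\vec f_{m}\}\subseteq{}^{\mathbb N}\mathbb Z^{v}$ be given (enlarging $F$ by zero sequences we may assume $|F|=m$), and consider the at most $mu$ integer-valued sequences $t\mapsto\bigl(dA\vec f_{i}(t)\bigr)_{j}$ for $i\le m$, $j\le u$; these are $\mathbb Z$-valued because $dA$ has integer entries and $\vec f_{i}(t)\in\mathbb Z^{v}$. Apply Lemma \ref{main lemma} to this family with modulus $d$: we get finite sets $K_{1},K_{2},\dots\subseteq\mathbb N$ with $\max K_{n}<\min K_{n+1}$, with $K_{n}\subseteq\{(n-1)\ell+1,\dots,n\ell\}$, and with $\sum_{t\in K_{n}}\bigl(dA\vec f_{i}(t)\bigr)_{j}\in d\mathbb Z$ for all $i,j,n$. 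Setting $\vec F_{i}(n)=\sum_{t\in K_{n}}\vec f_{i}(t)\in\mathbb Z^{v}$, this says exactly that $A\vec F_{i}(n)=\sum_{t\in K_{n}}A\vec f_{i}(t)\in\mathbb Z^{u}$ for every $i,n$ — the integrality needed before these vectors can be fed into the $CR$ property of $B\subseteq\mathbb Z$.

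Next I would apply the $mu$-$CR$ property of $B$ to the (genuinely $\mathbb Z$-valued) sequences $n\mapsto\bigl(A\vec F_{i}(n)\bigr)_{j}$, obtaining $c\in\mathbb Z$ and $L\in\mathcal P_f(\{1,\dots,r_{0}\})$ with $c+\sum_{n\in L}\bigl(A\vec F_{i}(n)\bigr)_{j}\in B$ for all $i\le m$, $j\le u$. Using the hypothesis on $A$, pick $\vec z\in\mathbb Z^{v}$ with $A\vec z=\overline c$, and put $\vec a=\vec z$ and $H=\bigcup_{n\in L}K_{n}$. Then for each $i$, $A\bigl(\vec a+\sum_{t\in H}\vec f_{i}(t)\bigr)=\overline c+\sum_{n\in L}A\vec F_{i}(n)$, whose $j$-th coordinate is $c+\sum_{n\in L}\bigl(A\vec F_{i}(n)\bigr)_{j}\in B$; hence $\vec a+\sum_{t\in H}\vec f_{i}(t)\in C$ for all $i$. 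Finally $H\subseteq\bigcup_{n\le r_{0}}\{(n-1)\ell+1,\dots,n\ell\}=\{1,\dots,r_{0}\ell\}=\{1,\dots,r\}$, and $H$ is nonempty since each $K_{n}$ and $L$ are, so $H\in\mathcal P_f(\{1,\dots,r\})$, as required.

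The main obstacle — and the reason Lemma \ref{main lemma} is needed in place of the weaker \cite[Lemma 3.3]{HS20} that sufficed for the $J$-set analogue \cite[Theorem 3.7]{HS23} — is the uniformity built into the $CR$ definition: $H$ must sit inside a window $\{1,\dots,r\}$ whose size is independent of $F$. The $J$-set argument only needs $H$ to be finite, so it is enough to know that grouping sets $K_{n}$ exist; here each $K_{n}$ must in addition be confined to the block $\{(n-1)\ell+1,\dots,n\ell\}$, so that $\bigcup_{n\in L}K_{n}$ is trapped in $\{1,\dots,r_{0}\ell\}$. The remaining ingredients — clearing denominators via $dA$ and resetting the translate through $A\vec z=\overline c$ — are a routine adaptation of \cite[Theorem 3.7]{HS23}.
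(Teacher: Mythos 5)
Your proof is correct and follows essentially the same route as the paper: use Lemma \ref{main lemma} to group indices into consecutive blocks $K_n$ of bounded length so that the block sums become integer vectors after applying $A$, then invoke the $mu$-$CR$ property of $B$ on the resulting at most $mu$ scalar sequences, and translate back via a solution of $A\vec z=\overline{c}$. The only cosmetic difference is that you feed the grouping lemma the $mu$ integer sequences $t\mapsto(dA\vec f_i(t))_j$ (block length $d^{mu}(d-1)+1$), whereas the paper feeds it the $mv$ coordinate sequences $f_{i,j}$ (block length $d^{mv}(d-1)+1$); both give a uniform window $r$ independent of $F$, which is the crux.
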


\begin{proof}
Let $C=\left\{ \vec{y}\in\mathbb{Z}^{v}:A\vec{y}\in B^{u}\right\} $,  let $F\in\mathcal{P}_{f}\left({}^{\mathbb{N}}{\left(\mathbb{Z}^{v}\right)}\right)$
with $|F|=m$. Then for any $\vec{f}\in F$ and each $t\in\mathbb{N}$,
$$
\begin{array}{ccc}
\vec{f}\left(t\right) & = & \left(\begin{array}{c}
f_{1}\left(t\right)\\
f_{2}\left(t\right)\\
\vdots\\
f_{v}\left(t\right)
\end{array}\right)\end{array}
$$

Pick $d\in\mathbb{N}$ such that all entries of $dA$ are integers.
Let 
$$
L=\left\{ \Pi_{i}\left(\vec{f}\left(t\right)\right):i\in\left\{ 1,2,\ldots,v\right\} \,\text{and}\,\vec{f}\in F\right\} 
$$
 and $|L|=mv$. From the  Lemma \ref{main lemma}, we get for each $n\in\mathbb{N}$,
$\max K_{n}<\min K_{n+1}$ and for each $f\in L$ , $d$ divides $\sum_{t\in K_{n}}f(t)$
with 
$$
K_{n}\subset\left\{ (n-1)k+1,(n-1)k+2,\ldots,nk\right\} 
$$
where $k=d^{mv}(d-1)+1$. For $\vec{f}\in F$ and each $i\in\left\{ 1,2,\ldots,u\right\} $,
and $n\in\mathbb{N}$. As $B$ is a $mu$-$CR$-set, let 
$$
g_{\vec{f},i}\left(n\right)=\sum_{j=1}^{v}a_{i,j}\sum_{t\in H_{n}}f_{j}\left(t\right)
$$
where $A=\left(a_{i,j}\right)_{u\times v}$.
Then each $g_{\vec{f},i}\left(n\right)\in{}^{\mathbb{N}}\mathbb{Z}$
and $|\left\{ g_{\vec{f},i}:i=1,2,\ldots,u\,\text{and}\,\vec{f}\in F\right\} \mid\leq mu$.
Pick $a\in\mathbb{Z}$ and $G\subset\left\{ 1,2,\ldots,r\right\} $
such that for each $\vec{f}\in F$ and each $i\in\left\{ 1,2,\ldots,u\right\} $,
$a+\sum_{n\in G}g_{\vec{f},i}\left(n\right)\in B$. 

Pick $\vec{x}\in\mathbb{Z}^{v}$ such that $A\vec{x}=\overline{a}\in\mathbb{Z}^{u}$.
Let $K=\cup_{n\in G}K_{n}$. Then 
$$
K\subset\cup_{n\in\left\{ 1,2,\ldots,r\right\} }K_{n}\subset\left\{ 1,2,\ldots,rd^{mv}\left(d-1\right)+r\right\}.
$$
We claim that for $\vec{f}\in F$, with $|F|\leq m$,  $\vec{x}+\sum_{t\in K}\vec{f}\left(t\right)\in C$.
To see this we need to show that for $i\in\left\{ 1,.2,\ldots,u\right\} $,
entry $i$ of $A\left(\vec{x}+\sum_{t\in K}\vec{f}\left(t\right)\right)$
is in $B$. That entry is 

$$
	\begin{aligned}
		a+\sum_{j=1}^{v}a_{i,j}\sum_{t\in K}f_{j}\left(t\right) & =  a+\sum_{j=1}^{v}a_{i,j}\sum_{n\in G}\sum_{t\in K_{n}}f_{j}\left(t\right)\\
		&=  a+\sum_{n\in G}\sum_{j=1}^{v}a_{i,j}\sum_{t\in K_{n}}f_{j}\left(t\right)\\
  &=a+\sum_{n\in G}g_{\vec{f},i}\left(n\right)\in B.
	\end{aligned}$$
\end{proof}
From the above theorem and fact that a set is a $CR$-set if and only
if for each $k\in\mathbb{N}$, it is a $k$-$CR$-set, we immediately
get the following corollary:

\begin{corollary}
   Let $u,v\in\mathbb{N}$, let $A$ be a $u\times v$ matrix with rational
entries and let $B$ be a $CR$-set in $\mathbb{Z}$. Assume that
for each $a\in\mathbb{Z}$, there exists $\vec{x}\in\mathbb{Z}^{v}$
such that $A\vec{x}=\overline{a}\in\mathbb{Z}^{u}$. Then
$\left\{ \vec{y}\in\mathbb{Z}^{v}:A\vec{y}\in B^{u}\right\} $
is a $CR$-set in $\mathbb{Z}^{v}$. 
\end{corollary}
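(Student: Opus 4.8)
The plan is to deduce this directly from the preceding theorem together with clause (b) of the last definition, which says that a subset of a commutative semigroup is a $CR$-set precisely when it is a $k$-$CR$-set for every $k\in\mathbb{N}$. So I would set $C=\left\{\vec{y}\in\mathbb{Z}^{v}:A\vec{y}\in B^{u}\right\}$ and observe that it suffices to show $C$ is an $m$-$CR$-set in $\mathbb{Z}^{v}$ for each fixed $m\in\mathbb{N}$.

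The key step is then to fix such an $m$ and feed it into the theorem. Since $B$ is a $CR$-set in $\mathbb{Z}$, it is in particular an $mu$-$CR$-set in $\mathbb{Z}$, because $mu\in\mathbb{N}$ and being a $CR$-set entails being a $k$-$CR$-set for every $k$. The hypothesis on $A$ — that for each $a\in\mathbb{Z}$ there is $\vec{x}\in\mathbb{Z}^{v}$ with $A\vec{x}=\overline{a}$ — does not involve $m$ and is exactly the standing assumption of the theorem. Hence all hypotheses of the theorem are satisfied with this choice of $m$, $u$, $v$, $A$, and $B$, and the theorem yields that $C$ is an $m$-$CR$-set in $\mathbb{Z}^{v}$.

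Finally, since $m\in\mathbb{N}$ was arbitrary, $C$ is a $k$-$CR$-set for every $k\in\mathbb{N}$, hence a $CR$-set in $\mathbb{Z}^{v}$, as claimed. I do not anticipate any genuine obstacle here: the one point worth noting is that the definition of $m$-$CR$-set lets the witnessing bound $r$ depend on $m$, so no uniformity across the values of $m$ is needed and the reduction goes through verbatim. This is in fact the reason it was convenient to formulate the theorem in terms of the refined parameters $m$-$CR$ and $mu$-$CR$ rather than with $CR$-sets throughout.
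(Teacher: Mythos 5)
Your argument is correct and is exactly how the paper deduces the corollary: since a $CR$-set is a $k$-$CR$-set for every $k$, applying the theorem with each $m$ (using that $B$ is an $mu$-$CR$-set) shows the preimage set is an $m$-$CR$-set for all $m$, hence a $CR$-set. No differences worth noting.
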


\begin{theorem}\label{elementary charac}\cite[Theorem 5]{DDG}
Let $(S,+)$ be a countable commutative semigroup, then the
following are equivalent.
\begin{itemize}
\item[(a)]  $A$ is an essential $\mathcal{CR}$-set.

\item[(b)] There is a decreasing sequence $\left\langle C_{n}\right\rangle _{n=1}^{\infty}$
of subsets of $A$ such that 
\begin{itemize}
\item[(i)] for each $n\in\mathbb{N}$ and each $x\in C_{n}$, there
exists $m\in\mathbb{N}$ with $C_{m}\subseteq -x+C_{n}$ and 

\item[(ii)]  for each $n\in\mathbb{N}$, $C_{n}$ is a $\mathcal{CR}$-set.
\end{itemize}
\end{itemize}
\end{theorem}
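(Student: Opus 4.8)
The plan is to prove the equivalence by establishing the two implications separately, relying on the algebra of $\beta S$, on the fact (recorded in the excerpt) that $CR(S)$ is a closed subsemigroup of $\beta S$, and on the standard correspondence --- via \cite[Lemma 2.14]{BG} (partition regularity of $\mathcal{CR}$-sets) together with \cite[Theorem 3.20 Page-63]{HS12} --- that a set $D\subseteq S$ is a $\mathcal{CR}$-set if and only if $\overline{D}\cap CR(S)\neq\emptyset$. Throughout, for an idempotent $p$ and $D\in p$ I write $D^{\star}=\{x\in D:-x+D\in p\}$ and use \cite[Lemma 4.14]{HS12}, which gives $D^{\star}\in p$ and $-x+D^{\star}\in p$ for every $x\in D^{\star}$; in particular every set of the form $D^{\star}$ is \emph{self-starred}, meaning $-x+D^{\star}\in p$ for all $x\in D^{\star}$.

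For (b) $\Rightarrow$ (a), set $T=\bigcap_{n=1}^{\infty}\overline{C_n}$. Condition (ii) makes each $\overline{C_n}\cap CR(S)$ a nonempty closed subset of the compact space $\beta S$, and these sets decrease with $n$, so by compactness the intersection $T\cap CR(S)=\bigcap_{n=1}^{\infty}\bigl(\overline{C_n}\cap CR(S)\bigr)$ is nonempty. Next I would check that $T$ is a subsemigroup: if $p,q\in T$ then $C_n\in p$ and $C_n\in q$ for all $n$, and to see $C_n\in p+q$ it suffices to show $C_n\subseteq\{x\in S:-x+C_n\in q\}$; but for $x\in C_n$, condition (i) yields $m$ with $C_m\subseteq -x+C_n$, and $C_m\in q$ forces $-x+C_n\in q$. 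Since $CR(S)$ is itself a subsemigroup, $T\cap CR(S)$ is a nonempty closed --- hence compact right topological --- subsemigroup of $\beta S$, so by Ellis's theorem it contains an idempotent $p$. Then $p\in CR(S)$, so every member of $p$ is a $\mathcal{CR}$-set, and $p\in\overline{C_1}\subseteq\overline{A}$ because $C_1\subseteq A$; thus $p$ is an idempotent in $\overline{A}\cap CR(S)$, i.e.\ $A$ is an essential $\mathcal{CR}$-set.

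For (a) $\Rightarrow$ (b), fix an idempotent $p\in\overline{A}\cap CR(S)$, so $A\in p$; here the countability of $S$ enters. List $\mathbb{N}\times S$ as $\langle(k_j,s_j)\rangle_{j=1}^{\infty}$, arranged so that $k_j\le j$ for every $j$ (possible by processing pairs along finite diagonals). Build a decreasing sequence $\langle C_n\rangle_{n=1}^{\infty}$ of self-starred members of $p$ recursively: take $C_1=A^{\star}$, and given $C_1\supseteq\cdots\supseteq C_j$, put $C_{j+1}=\bigl(C_j\cap(-s_j+C_{k_j})\bigr)^{\star}$ if $s_j\in C_{k_j}$ --- legitimate, since then $-s_j+C_{k_j}\in p$ because $C_{k_j}$ is self-starred --- and $C_{j+1}=C_j$ otherwise. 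By \cite[Lemma 4.14]{HS12} each $C_{j+1}$ is a self-starred member of $p$ with $C_{j+1}\subseteq C_j$, and $C_{j+1}\subseteq -s_j+C_{k_j}$ in the first case. Each $C_n$ lies in $p$ and $p\in CR(S)$, so each $C_n$ is a $\mathcal{CR}$-set, giving (ii); and $C_1=A^{\star}\subseteq A$, so every $C_n\subseteq A$. For (i), given $n$ and $x\in C_n$, choose $j$ with $(k_j,s_j)=(n,x)$; since $k_j=n\le j$ the set $C_n$ is already defined at stage $j$ and $x\in C_n$, so the recursion forced $C_{j+1}\subseteq -x+C_n$, and $m=j+1$ works.

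The main obstacle is the bookkeeping in (a) $\Rightarrow$ (b): one must verify that the starring operation can be iterated consistently --- that every $D^{\star}$ is self-starred, so $-s_j+C_{k_j}$ remains a member of $p$ and the next starring step is available --- and that $\mathbb{N}\times S$ can indeed be enumerated with $k_j\le j$, so that each instance of (i) is handled only after the relevant $C_n$ has been constructed. Both points are routine once isolated, but together they are where \cite[Lemma 4.14]{HS12} and the countability hypothesis do the real work; the implication (b) $\Rightarrow$ (a) is the comparatively mechanical ``descending sequence of large sets yields an idempotent'' argument, carried out inside the subsemigroup $CR(S)$.
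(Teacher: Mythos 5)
Your proof is correct, and it is essentially the argument of the cited source: the paper itself gives no proof of this theorem (it is quoted from \cite[Theorem 5]{DDG}), and the standard proof there follows exactly your two steps --- for (b) $\Rightarrow$ (a), intersecting the closed subsemigroup $\bigcap_n\overline{C_n}$ with $CR(S)$ and applying Ellis's theorem, and for (a) $\Rightarrow$ (b), the recursive star construction from \cite[Lemma 4.14]{HS12} over an enumeration of $\mathbb{N}\times S$, which is where countability is used. This is the same scheme as the classical characterization of $C$-sets in \cite[Theorem 14.27]{HS12}, transported to the closed subsemigroup $CR(S)$ via the partition regularity of $\mathcal{CR}$-sets.
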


\begin{theorem}
    Let $u,v\in\mathbb{N}$, let $A$ be a $u\times v$ matrix with rational
entries. If for any $CR$-set $B$, $\left\{ \vec{y}\in\mathbb{Z}^{v}:A\vec{y}\in B^{u}\right\} $
is $CR$-set in $\mathbb{Z}^{v}$, then for an essential $CR$-set, C,
$\left\{ \vec{y}\in\mathbb{Z}^{v}:A\vec{y}\in C^{u}\right\} $
is essential $CR$-set in $\mathbb{Z}^{v}$. 
\end{theorem}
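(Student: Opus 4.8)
The plan is to use the elementary characterization of essential $CR$-sets given in Theorem \ref{elementary charac}. Suppose $C$ is an essential $CR$-set in $\mathbb{Z}$, and let $\langle C_n \rangle_{n=1}^\infty$ be the decreasing sequence of subsets of $C$ furnished by that theorem, so that (i) for each $n$ and each $x \in C_n$ there is $m$ with $C_m \subseteq -x + C_n$, and (ii) each $C_n$ is a $CR$-set. Write $D = \{\vec{y} \in \mathbb{Z}^v : A\vec{y} \in C^u\}$ and, for each $n$, $D_n = \{\vec{y} \in \mathbb{Z}^v : A\vec{y} \in C_n^u\}$. The goal is to verify that $\langle D_n \rangle_{n=1}^\infty$ witnesses that $D$ is an essential $CR$-set via Theorem \ref{elementary charac}, so I must check: each $D_n \subseteq D$, the sequence is decreasing, each $D_n$ is a $CR$-set, and the ``divisibility'' style condition (i) holds for the $D_n$.

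First I would note that $D_n \subseteq D$ and the decreasing property are immediate from $C_n \subseteq C$ and $C_{n+1} \subseteq C_n$, since these inclusions are preserved under the preimage operation $B \mapsto \{\vec{y} : A\vec{y} \in B^u\}$. Second, each $D_n$ is a $CR$-set: this is exactly the hypothesis of the present theorem applied to the $CR$-set $C_n$ (which uses the Corollary proved above, i.e. that the image-preimage of a $CR$-set under a matrix $A$ satisfying the ``constant-vector-attaining'' condition is again a $CR$-set). Note the hypothesis as stated in this theorem is phrased as an assumption (``if for any $CR$-set $B$...''), so I would simply invoke it; one should however also carry along the standing assumption that for each $a \in \mathbb{Z}$ there is $\vec{x} \in \mathbb{Z}^v$ with $A\vec{x} = \overline{a}$, which is needed both for the Corollary and for the next step.

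The main work is verifying condition (i) for $\langle D_n \rangle$. Fix $n \in \mathbb{N}$ and $\vec{y} \in D_n$, so $A\vec{y} \in C_n^u$; write $A\vec{y} = (z_1, \dots, z_u)$ with each $z_i \in C_n$. By property (i) of $\langle C_n \rangle$ applied to each $z_i$, there is $m_i$ with $C_{m_i} \subseteq -z_i + C_n$; let $m = \max\{m_1, \dots, m_u\}$, so (using that the sequence is decreasing) $C_m \subseteq -z_i + C_n$ simultaneously for all $i$. I claim $D_m \subseteq -\vec{y} + D_n$. Indeed, if $\vec{w} \in D_m$ then $A\vec{w} \in C_m^u$, and for each $i$ the $i$-th entry of $A(\vec{y} + \vec{w})$ is $z_i + (A\vec{w})_i \in z_i + C_m \subseteq C_n$; hence $A(\vec{y}+\vec{w}) \in C_n^u$, i.e. $\vec{y} + \vec{w} \in D_n$, which says $\vec{w} \in -\vec{y} + D_n$. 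This gives condition (i). Then Theorem \ref{elementary charac} applied in the semigroup $\mathbb{Z}^v$ concludes that $D$ is an essential $CR$-set in $\mathbb{Z}^v$.

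I do not anticipate a serious obstacle: the argument is a routine ``transport a combinatorial characterization through a linear preimage'' proof, and the only points requiring care are (a) taking the maximum $m_i$ and using that the $C_n$ decrease, so that a single $m$ works for all $u$ coordinates at once, and (b) making sure the linearity of $A$ is used correctly, namely $A(\vec{y}+\vec{w}) = A\vec{y} + A\vec{w}$, so that entrywise membership behaves additively. One should also double-check that the constant-vector hypothesis on $A$ is either inherited or can be silently assumed, since it is implicitly required to even state that each $D_n$ is a $CR$-set via the Corollary; if the theorem is meant to be read with that hypothesis in force (as in the surrounding results), there is nothing more to do.
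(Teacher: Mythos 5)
Your proposal is correct and follows essentially the same route as the paper's own proof: both pass to the decreasing sequence $\langle C_n\rangle$ from Theorem \ref{elementary charac}, set $D_n=\{\vec{y}\in\mathbb{Z}^v : A\vec{y}\in C_n^u\}$, invoke the hypothesis to see each $D_n$ is a $CR$-set, and verify condition (i) by taking $m=\max\{m_1,\dots,m_u\}$ and using linearity of $A$. Your side remark is also apt: as stated, the theorem takes the $CR$-preservation as a standing hypothesis, so the constant-vector condition on $A$ is not needed here (it is only needed to supply that hypothesis via the preceding corollary).
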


\begin{proof}
   As $C$ is essential $CR$-set, by Theorem \ref{elementary charac}, there is a decreasing sequence $\langle C_{n}\rangle_{n=1}^{\infty}$ of subsets of $A$ such that 
   \begin{itemize}
\item[(i)] for each $n\in\mathbb{N}$ and each $\vec{x}\in C_{n}$, there
exists $m\in\mathbb{N}$ with $C_{m}\subseteq -\Vec{x}+C_{n}$ and 

\item[(ii)]  for each $n\in\mathbb{N}$, $C_{n}$ is a $CR$-set.
\end{itemize}
Now, we can construct an decreasing sequence $\langle D_{n}\rangle_{n=1}^{\infty}$, where $D_{n}=\left\{ \vec{y}\in\mathbb{Z}^{v}:A\vec{y}\in C_{n}^{u}\right\}$. As for each $n\in\mathbb{N}$, $D_{n}$ is a $CR$-set.  To prove $\left\{ \vec{y}\in\mathbb{Z}^{v}:A\vec{y}\in C^{u}\right\} $
is essential $CR$-set in $\mathbb{Z}^{v}$, it is sufficient to show that for each $n\in\mathbb{N}$ and each $x\in D_{n}$, there
exists $m\in\mathbb{N}$ with $C_{m}\subseteq -\Vec{y}+D_{n}$.

Now choose $n\in\mathbb{N}$ and $\vec{y}\in D_{n}$. Let $A=\left(a_{i,j}\right)_{u\times v}$. Then $\left\{\sum_{j=1}^{v}a_{i,j}y_{j}\in C_{n}:i\in\left\{ 1,2,\ldots,u\right\}\right\}\subset D_{n}$. For $i=1,2,\ldots,u$, there exist $m_{i}\in\mathbb{N}$ such that $C_{m_{i}}\subseteq -\sum_{j=1}^{v}a_{i,j}y_{j}+C_{n}$ and taking $m=\max\left\{m_{i}:i=1,2,\ldots,u \right\}$, we get $C_{m}\subseteq\bigcap_{i=1}^{u}\left\{\sum_{j=1}^{v}a_{i,j}y_{j}+C_{n}\right\}$. For any
$$
\vec{z}\in D_{m}\implies\left\{\sum_{j=1}^{v}a_{i,j}z_{j}:i\in\left\{ 1,2,\ldots,u\right\}\right\}\subseteq C_{m}\subseteq\bigcap_{i=1}^{u}\left\{-\sum_{j=1}^{v}a_{i,j}y_{j}+C_{n}\right\},
$$
which implies $\left\{\sum_{j=1}^{v}a_{i,j}\left(y_{j}+z_{j}\right):i\in\left\{ 1,2,\ldots,u\right\}\right\}\subset C_{n}$. Hence
$$
	\begin{aligned}
		\vec{y}+\vec{z}\in D_{n} & \implies  \vec{z}\in -\vec{y}+D_{n}\\
		&\implies D_{m}\subseteq -\vec{y}+D_{n}.
	\end{aligned}$$

\end{proof}

\bibliographystyle{plain}

\end{document}